\renewcommand{\theequation}{\thesection.\arabic{equation}}
\newtheorem{defn}{Definition}[section]
\newtheorem{lem}{Lemma}[section]
\newtheorem{thm}{Theorem} [section]
\newtheorem{prop}{Proposition} [section]
\newtheorem{exmp}{Example} [section]
\newtheorem{coro}{Corollary}[section]
\newtheorem{rem}{Remark}[section]
\theoremstyle{remark}
\newcommand{\Tr}{{\rm Tr}}
\newcommand{\gf}{ {{\mathbb F}} }
\numberwithin{equation}{section}
\title{Algebraic Structure of Permutational Polynomials over $\gf_{q^n}$
\thanks{Supported By NSF of China No. 12171163 }}
\author{
Pingzhi Yuan\thanks{ P. Yuan is with School of  of Mathematical Science, South China Normal University,  Guangzhou 510631, China (email: yuanpz@scnu.edu.cn).}}
    \date{}
\begin{document}
\baselineskip15pt \maketitle
\renewcommand{\theequation}{\arabic{section}.\arabic{equation}}
\catcode`@=11 \@addtoreset{equation}{section} \catcode`@=12

    \begin{abstract}In this paper, we propose  a new  algebraic structure of  permutation polynomials over $\gf_{q^n}$. As an application of this new algebraic structure, we give some classes of new PPs over $\gf_{q^n}$ and answer an open problem in Charpin and  Kyureghyan~\cite{CK09}.
\end{abstract}

{\bf Keywords:}
 Finite fields, permutation polynomials, algebraic frame theorem,  algebraic structure.

\section{Introduction}

\,\,\, Let $q$ be a prime power, $\mathbb{F}_q$ be the finite field of order $q$, and $\mathbb{F}_q[x]$
be the ring of polynomials in a single indeterminate $x$ over $\mathbb{F}_q$.   We know that every map from $\mathbb{F}_q$ to $\mathbb{F}_q$ can be viewed as a polynomial in the set~$\left(\mathbb{F}_q[x]/(x^q-x), +, \circ\right)$, where the two operations are the addition and the composition of two polynomials modulo $x^q-x$.  A polynomial
$f \in\mathbb{F}_q[x]/(x^q-x)$ is called a {\em permutation polynomial} (PP for short) of $\mathbb{F}_q$ if it induces
a bijective map from $\mathbb{F}_q$ to itself. The  unique polynomial denoted by $f^{-1}(x)$ in~$\mathbb{F}_q[x]/(x^q-x)$ such that $f\circ f^{-1}(x)=f^{-1}\circ f(x)=x$ is called the compositional inverse of $f(x)$, where $x$ is the unity in $\mathbb{F}_q[x]/(x^q-x)$.

Permutation polynomials with simple algebraic forms  over finite fields  and their compositional inverses have wide applications in coding
theory \cite{Ding13, DZ14, LC07}, cryptography \cite{RSA, SH}, combinatorial
design theory \cite{DY06}, and other
areas of mathematics and engineering \cite{LN86, Mull}. For more  on
PPs and their compositional inverses, we refer to \cite{AGW11, LQW19, MP13, NLQW21, TW17, W24, WL13, WY24, Y22, YD11, YD14}.

 Let
 $n$ be a positive integer, a polynomial of the form
 $$ L(x) =\sum_{i=0}^{n-1}a_ix^{q^i}\in\gf_{q^n}[x]$$
 is called a $q$-polynomial (or a linear polynomial) over $\gf_{q^n}$, and is a permutation polynomial of $\gf_{q^n}$ if
 and only if $L(x)$ only has the root $0$ in $\gf_{q^n}$ \cite[Theorem 7.9]{LN86}. The trace function $\Tr^n_1(\cdot)$ (or $\Tr(\cdot)$ for short) from $\gf_{q^n}$
to $\gf_q$ is defined by
$$\Tr(x)=\Tr^n_1(x) = x + x^q+\dots+x^{q^{n-1}}, \quad x\in\gf_{q^n}.$$

Recall that the field $\gf_{q^n}$  may be viewed as a vector space of dimension $n$ over $\gf_q$. Therefore,  PPs over  $\gf_{q^n}$ may have some algebraic structures. The motivation of the present paper is to provide some algebraic structures of PPs over  $\gf_{q^n}$.

Let $u_1, \dots, u_n$ and $v_1, \dots, v_n$  be a dual pair of ordered bases  of $\gf_{q^n}$ over $\gf_q$, and let $f(x)\in\gf_{q^n}[x]$ be a polynomial. Then for any $c\in\gf_{q^n}$, $f(c)=u_1a_1+\dots+u_na_n, a_i\in\gf_q, 1\le i\le n$, we define
$$f_i: \gf_{q^n}\to\gf_{q}\quad \mbox{by}\quad f_i(c)=a_i, 1\le i\le n.$$
It is easy to see that $f_i(x)=\Tr(v_if(x))\in\gf_{q^n}[x], 1\le i\le n$ are well-defined and are uniquely determined by $f(x)$ and the base $v_1, \dots, v_n$. Moreover, we have
$$f(x)=u_1f_1(x)+\dots+u_nf_n(x)=u_1\Tr(v_1f(x))+\dots+u_n\Tr(v_nf(x)).$$
Throughout the paper, we view a polynomial  as an element of $\mathbb{F}_q[x]/(x^q-x)$, and we fix the above notations.

In 2008, Zhou~\cite{Z08} showed that  there
 are exactly $(q^n- 1)(q^n-q)\dots (q^n-q^{n-1})$ different linear permutation polynomials. 
In 2011, Yuan and Zeng~\cite{YZ11} proved the following Theorem \cite[Theorems 1.1 and 1.2]{YZ11}.

\begin{thm}\label{thYZ}1. Let $\{\omega_1, \omega_2, \dots, \omega_n\}$ be any given basis of $\gf_{q^n}$  over $\gf_{q}$, and let $L(x) = \sum_{i=0}^{n-1}a_ix^{q^i}$ be a
 linear polynomial over $\gf_{q^n}$. Then there are n elements $\theta_1, \theta_2, \dots, \theta_n\in\gf_{q^n}$ such that
$$ L(x) =\Tr(\theta_1x)\omega_1 +\dots+\Tr(\theta_nx)\omega_n.$$
 Moreover, $L(x)$ is a permutation polynomial if and only if $\{\theta_1, \theta_2, \dots, \theta_n\}$ is a basis of $\gf_{q^n}$  over $\gf_{q}$. And there
 are exactly $(q^n- 1)(q^n-q)\dots (q^n-q^{n-1})$ different linear permutation polynomials, in fact they must be of the form as above  with a basis $\{\theta_1, \theta_2, \dots, \theta_n\}$.

 2. Let $\{\theta_1, \theta_2, \dots, \theta_n\}$ be any given basis of $\gf_{q^n}$  over $\gf_{q}$, and let
$$ L(x) =\Tr(\theta_1x)\omega_1 +\dots+\Tr(\theta_nx)\omega_n, \omega_i\in\gf_{q^n}.$$
 Then $L(x)$ is a permutation polynomial if and only if $\{\omega_1, \omega_2, \dots, \omega_n\}$ is a basis of $\gf_{q^n}$  over $\gf_{q}$.\end{thm}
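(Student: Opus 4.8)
The plan is to reduce everything to linear algebra over $\gf_q$ using the nondegeneracy of the trace form $B(\alpha,\beta)=\Tr(\alpha\beta)$. The one fact I would establish first is that every $\gf_q$-linear functional $\lambda:\gf_{q^n}\to\gf_q$ has the form $\lambda(x)=\Tr(\theta x)$ for a unique $\theta\in\gf_{q^n}$. This is immediate once one checks that $\theta\mapsto\Tr(\theta\,\cdot\,)$ is a $\gf_q$-linear map from the $n$-dimensional space $\gf_{q^n}$ to its $n$-dimensional dual with trivial kernel: if $\theta\neq 0$ then $x\mapsto\theta x$ is a bijection of $\gf_{q^n}$, and since $\Tr$ is not identically zero there is some $x$ with $\Tr(\theta x)\neq 0$. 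Consequently $\{\theta_1,\dots,\theta_n\}$ is a basis of $\gf_{q^n}$ over $\gf_q$ if and only if the functionals $x\mapsto\Tr(\theta_i x)$ are linearly independent, equivalently if and only if the $\gf_q$-linear map $T\colon x\mapsto(\Tr(\theta_1 x),\dots,\Tr(\theta_n x))$ is an isomorphism $\gf_{q^n}\to\gf_q^{\,n}$.

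For the first equivalence in Part~1, given a linear $L$ and the fixed basis $\{\omega_1,\dots,\omega_n\}$, I would write $L(x)=\sum_{j=1}^n\omega_j L_j(x)$, where $L_j(x)\in\gf_q$ is the $j$-th coordinate of $L(x)$ in this basis. Since $L$ is $\gf_q$-linear, each $L_j$ is a functional, so by the fact above $L_j(x)=\Tr(\theta_j x)$ for a unique $\theta_j$, which yields the representation $L(x)=\sum_j\Tr(\theta_j x)\omega_j$. Now factor $L=M\circ T$, where $T$ is as above and $M\colon(a_1,\dots,a_n)\mapsto\sum_j a_j\omega_j$. Because $\{\omega_j\}$ is a basis, $M$ is an isomorphism, so (the $\gf_q$-linear map) $L$ is a permutation if and only if $T$ is an isomorphism, i.e. if and only if $\{\theta_1,\dots,\theta_n\}$ is a basis (cf.~\cite[Theorem 7.9]{LN86}). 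The enumeration follows at once: $L\mapsto(\theta_1,\dots,\theta_n)$ is a bijection between linear polynomials and ordered $n$-tuples over $\gf_{q^n}$, and $L$ is a PP exactly when the tuple is an ordered basis; since there are precisely $(q^n-1)(q^n-q)\cdots(q^n-q^{n-1})$ ordered bases, this both recovers Zhou's count and shows every linear PP has this form.

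For Part~2 the fixed data are the basis $\{\theta_1,\dots,\theta_n\}$ and the coefficients $\omega_i$, and here the dual basis gives the cleanest route. Let $\{\theta_1^{*},\dots,\theta_n^{*}\}$ be the basis dual to $\{\theta_i\}$ under the trace form, so that $\Tr(\theta_i\theta_j^{*})=\delta_{ij}$. Evaluating at a dual basis vector gives $L(\theta_j^{*})=\sum_i\Tr(\theta_i\theta_j^{*})\omega_i=\omega_j$, so the $\gf_q$-linear map $L$ carries the basis $\{\theta_j^{*}\}$ onto $\{\omega_1,\dots,\omega_n\}$. Since a linear endomorphism of a finite-dimensional space is bijective if and only if it sends a basis to a basis, $L$ is a permutation polynomial if and only if $\{\omega_1,\dots,\omega_n\}$ is a basis of $\gf_{q^n}$ over $\gf_q$.

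The only genuine content is the nondegeneracy of the trace form and the resulting self-duality of $\gf_{q^n}$; once this is in hand both parts are routine linear algebra. The point I would be most careful about is the equivalence ``$\{\theta_i\}$ a basis $\iff$ $T$ an isomorphism,'' which requires both directions: linear dependence of the $\theta_i$ forces linear dependence of the functionals $\Tr(\theta_i\,\cdot\,)$ and hence non-surjectivity of $T$, so one must argue the contrapositive and not merely the easy implication.
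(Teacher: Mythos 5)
Your proof is correct. Note that the paper itself does not prove Theorem~\ref{thYZ}: it is quoted from Yuan and Zeng~\cite{YZ11}, so there is no in-paper argument to compare against. Your route --- nondegeneracy of the trace form giving $\lambda(x)=\Tr(\theta x)$ for every $\gf_q$-linear functional, the factorization $L=M\circ T$ for Part~1 together with the counting of ordered bases, and evaluation of $L$ at the trace-dual basis $\{\theta_j^{*}\}$ for Part~2 --- is the standard one and is complete, including the contrapositive direction of the equivalence between linear independence of the $\theta_i$ and surjectivity of $T$.
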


The main purpose of the present paper is to prove the following much more general result.

\begin{thm}\label{thnew0}If $f(x)\in\gf_{q^n}[x]$ is a PP over $\gf_{q^n}$ and $h_i(x)\in\gf_q[x], 1\le i\le n$ are maps from $\gf_{q}$ to $\gf_{q}$, then the polynomial
$$F(x)=a_1h_1(f_1(x))+\dots+a_nh_n(f_n(x)), \quad a_i\in\gf_{q^n}, 1\le i\le n,$$
where $f_i(x)=\Tr(v_if(x)), 1\le i\le n$ are defined above, is a PP over $\gf_{q^n}$ if and only if the following two conditions hold:

(1) $a_1, \dots, a_n$ is a basis of $\gf_{q^n}$ over $\gf_q$;

(2) $h_i(x)\in\gf_q[x], 1\le i\le n$ are PPs over $\gf_{q}$.

Moreover, if $f^{-1}(x)$ and $h_i^{-1}(x), 1\le i\le n$ are the compositional inverses of $f(x)$ and $h_i(x), 1\le i\le n$, respectively, and $b_1, \dots, b_n$ is the dual base of $a_1, \dots, a_n$, then we have
$$F^{-1}(x)=f^{-1}\left(\sum_{i=1}^nu_ih_i^{-1}(\Tr(b_ix))\right).$$
\end{thm}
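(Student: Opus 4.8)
The plan is to reduce the statement to a linear-algebra fact about $\gf_q^n$ and then read off both the criterion and the inverse by direct substitution. First I would exploit that $f$ is a PP: since $f(x)=\sum_{j=1}^n u_j f_j(x)$ with $u_1,\ldots,u_n$ a basis, the coordinate map $\Phi\colon x\mapsto (f_1(x),\ldots,f_n(x))$ is the composition of the bijection $f$ with the $\gf_q$-linear isomorphism $z\mapsto(\Tr(v_1z),\ldots,\Tr(v_nz))$, hence is a bijection of $\gf_{q^n}$ onto $\gf_q^n$. Introducing the auxiliary map
$$G\colon \gf_q^n\to\gf_{q^n},\qquad G(y_1,\ldots,y_n)=\sum_{i=1}^n a_ih_i(y_i),$$
we have $F=G\circ\Phi$, so $F$ is a PP of $\gf_{q^n}$ if and only if $G$ is a bijection.

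For the sufficiency I would factor $G=\Lambda\circ H$, where $H(y_1,\ldots,y_n)=(h_1(y_1),\ldots,h_n(y_n))$ and $\Lambda(z_1,\ldots,z_n)=\sum_i a_iz_i$. Under condition (2) each $h_i$ permutes $\gf_q$, so $H$ is a bijection of $\gf_q^n$; under condition (1) the $a_i$ form a basis, so $\Lambda$ is a $\gf_q$-linear isomorphism onto $\gf_{q^n}$. Hence $G$ is a bijection and $F$ is a PP.

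For the necessity --- the part I expect to be the real obstacle --- I would argue by counting. Set $S_i=h_i(\gf_q)\subseteq\gf_q$. Since $h_i(y_i)$ ranges independently over $S_i$, the image of $G$ is exactly $\{\sum_i a_is_i : s_i\in S_i\}$, whose cardinality is at most $\prod_i|S_i|\le q^n$. If $G$ is a bijection its image has $q^n$ elements, forcing $\prod_i|S_i|=q^n$ and hence $|S_i|=q$ for every $i$; equivalently each $h_i$ is surjective, thus a PP of $\gf_q$, giving (2). With $S_i=\gf_q$, equality in $\big|\{\sum_i a_is_i\}\big|=q^n$ says the $\gf_q$-linear map $(s_1,\ldots,s_n)\mapsto\sum_i a_is_i$ is injective, so $a_1,\ldots,a_n$ are linearly independent, i.e.\ a basis, giving (1). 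The subtlety is that the single equality $|\mathrm{Im}\,G|=q^n$ must yield both conditions simultaneously; the chained bound $|\mathrm{Im}\,G|\le\prod_i|S_i|\le q^n$ is precisely what couples them.

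Finally, for the inverse formula I would verify $F^{-1}\circ F=\mathrm{id}$ using the dual basis $b_1,\ldots,b_n$ of $a_1,\ldots,a_n$, so that $\Tr(b_ja_i)=\delta_{ij}$. Since $h_i(f_i(x))\in\gf_q$, applying $\Tr(b_j\,\cdot)$ to $F(x)=\sum_i a_ih_i(f_i(x))$ gives $\Tr(b_jF(x))=\sum_i h_i(f_i(x))\Tr(b_ja_i)=h_j(f_j(x))$; then $h_j^{-1}(\Tr(b_jF(x)))=f_j(x)$, whence $\sum_j u_jh_j^{-1}(\Tr(b_jF(x)))=\sum_j u_jf_j(x)=f(x)$, and applying $f^{-1}$ recovers $x$. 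As $F$ is already known to be a bijection, this left inverse is the compositional inverse, matching the claimed expression.
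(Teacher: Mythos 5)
Your proof is correct, and it takes a noticeably more self-contained route than the paper's. The paper proves sufficiency by invoking its Proposition \ref{prop1} (fibers of $F$ are the sets $\bigcap_i f_i^{-1}(h_i^{-1}(c_i))$, each a singleton because $f$ is a PP), disposes of necessity in one rather terse sentence (that the span of the $a_i$ must be all of $\gf_{q^n}$, with the claim that the $h_i$ are PPs essentially asserted), and obtains the inverse formula by appealing to the algebraic frame theorem of \cite{Yuan24} (Lemma \ref{th2.2}) with $\psi_i(x)=\Tr(b_ix)$. You instead factor $F=\Lambda\circ H\circ\Phi$ into a coordinate bijection, a componentwise map, and a linear map, which makes sufficiency immediate; your necessity argument via the chained cardinality bound $q^n=\sharp\,\mathrm{Im}\,G\le\prod_i\sharp S_i\le q^n$ is actually more complete than what the paper writes down (it is the same counting idea as the paper's Lemma \ref{le2.1}, rederived on the spot, and it cleanly extracts both conditions from a single equality); and your inverse computation $\Tr(b_jF(x))=h_j(f_j(x))$ followed by direct verification replaces the external frame lemma with an elementary check that a left inverse of a bijection on a finite set is the compositional inverse. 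What the paper's route buys is uniformity with the rest of its machinery (the same Propositions and Lemmas are reused elsewhere); what yours buys is independence from that machinery and a more transparent treatment of the necessity direction.
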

As a consequence of the above two theorems, we have
\begin{coro}\label{coro1}Let $\theta_1, \theta_2, \dots, \theta_n$ be $n$ elements  of $\gf_{q^n}$  over $\gf_{q}$, and let
$$ F(x) =a_1\Tr(\theta_1x)^{m_1} +\dots+a_n\Tr(\theta_nx)^{m_n}, a_i\in\gf_{q^n}, m_i\in \mathbb{N}.$$
 Then $F(x)$ is a PP over $\gf_{q^n}$ if and only if the following three conditions hold

 (i) $\gcd(m_1\dots m_n, q-1)=1$,

 (ii) $a_1, a_2, \dots, a_n$ is a basis of $\gf_{q^n}$  over $\gf_{q}$.

 (iii) $\theta_1, \theta_2, \dots, \theta_n$ is a basis of $\gf_{q^n}$  over $\gf_{q}$.
 \end{coro}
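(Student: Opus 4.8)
The plan is to realize $F(x)$ as an instance of the polynomial studied in Theorem~\ref{thnew0} with $h_i(x)=x^{m_i}$, and then to translate conditions (i)--(iii) into conditions (1)--(2) there. First I would introduce the linear polynomial $f(x)=\sum_{j=1}^n u_j\Tr(\theta_j x)$ built from the fixed basis $u_1,\dots,u_n$. Using the duality $\Tr(v_iu_j)=\delta_{ij}$ together with the fact that each $\Tr(\theta_j x)\in\gf_q$ is a scalar (so $\Tr(v_iu_j\Tr(\theta_jx))=\Tr(\theta_jx)\Tr(v_iu_j)$), a one-line computation gives $f_i(x)=\Tr(v_if(x))=\Tr(\theta_i x)$. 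Hence $F(x)=\sum_{i=1}^n a_i h_i(f_i(x))$ has exactly the shape appearing in Theorem~\ref{thnew0}. I would also record the elementary equivalence $\gcd(m_1\cdots m_n,q-1)=1 \iff \gcd(m_i,q-1)=1$ for every $i$, which is precisely the statement that every $h_i(x)=x^{m_i}$ permutes $\gf_q$; this identifies condition (i) with condition (2) of Theorem~\ref{thnew0}.

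The one structural obstacle is that Theorem~\ref{thnew0} presupposes that $f$ is a PP, whereas here, by Theorem~\ref{thYZ} applied with the basis $\{u_i\}$, the polynomial $f$ is a PP \emph{if and only if} condition (iii) holds. So (iii) cannot simply be read off from Theorem~\ref{thnew0} and must be established separately before that theorem can be invoked. For the ``if'' direction this is painless: assuming (iii), Theorem~\ref{thYZ} makes $f$ a PP, and then assuming (i) and (ii) I apply Theorem~\ref{thnew0} directly to conclude that $F$ is a PP of $\gf_{q^n}$.

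For the ``only if'' direction I would first prove the necessity of (iii) by hand. Suppose $\{\theta_1,\dots,\theta_n\}$ is not a basis; then it fails to span $\gf_{q^n}$, so by nondegeneracy of the trace form there is a nonzero $x_0$ with $\Tr(\theta_i x_0)=0$ for all $i$. Since each $\Tr(\theta_i x)$ is $\gf_q$-linear in $x$, this gives $\Tr(\theta_i(x+x_0))=\Tr(\theta_i x)$ for all $i$, whence $F(x+x_0)=F(x)$ identically, contradicting the injectivity of $F$. Thus $F$ being a PP forces (iii). Once (iii) is known, $f$ is a PP, so Theorem~\ref{thnew0} now applies, and the hypothesis that $F$ is a PP yields its conditions (1) and (2), which are exactly (ii) and (i).

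The main work lies in the necessity of (iii), where the master theorem is unavailable and the short translation-invariance argument does the job; the remainder is bookkeeping, matching the trace maps $\Tr(\theta_i x)$ and the power maps $x^{m_i}$ against Theorems~\ref{thYZ} and~\ref{thnew0} and recalling that $x^m$ permutes $\gf_q$ exactly when $\gcd(m,q-1)=1$.
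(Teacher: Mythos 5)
Your proof is correct and follows the route the paper intends: the paper offers no separate proof of this corollary, stating only that it is a consequence of Theorems~\ref{thYZ} and~\ref{thnew0}, which is exactly the reduction you carry out (take $f(x)=\sum_j u_j\Tr(\theta_j x)$, $h_i(x)=x^{m_i}$, and match conditions). Your explicit translation-invariance argument for the necessity of (iii) correctly supplies the one step that Theorem~\ref{thnew0} alone does not cover, since that theorem presupposes $f$ is a PP.
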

The remaining part of this paper is organized as follows. Section 2 focuses on the properties of the algebraic structure of PPs over $\gf_{q^n}$. In Section 3, we will prove the main theorem of this paper and present some corollaries. As an application of the results in Section 2, in Section 3,  we obtain some results on the PPs of the shape $G(X) + \gamma\Tr(H(X))$,
where $G(X), H(X)\in\gf_{q^n}, \gamma\in\gf_{q^n}$. This answers an open problem in Charpin and  Kyureghyan~\cite{CK09}.
\section{Some lemmas and propositions}
To begin with, we have the following simple and important result, which will be used later.
\begin{lem}\label{le2.1} Let $f(x)\in\gf_{q^n}[x]$ and let
$A_i=\{f_i(x)=\Tr(v_if(x)), x\in\gf_{q^n}\}, 1\le i\le n$. Then
$$\sharp Im f(x)\le\prod_{i=1}^n\sharp A_i,$$
where $\sharp S$ denotes the cardinality of the set $S$.\end{lem}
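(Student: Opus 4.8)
The plan is to reduce the statement to an elementary counting argument built on the coordinate representation $f(x)=\sum_{i=1}^n u_if_i(x)$ recorded in the introduction. The crucial structural fact is that, since $u_1,\dots,u_n$ is a basis of $\gf_{q^n}$ over $\gf_q$, the $\gf_q$-linear map $\Phi:\gf_q^n\to\gf_{q^n}$ defined by $\Phi(a_1,\dots,a_n)=\sum_{i=1}^n u_ia_i$ is a bijection; in particular it is injective, so every element of $\gf_{q^n}$ is determined by a unique coordinate tuple. This is what lets me pass back and forth between field values of $f$ and tuples of the scalar functions $f_i$.

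First I would record that for each $c\in\gf_{q^n}$ one has $f(c)=\Phi\bigl(f_1(c),\dots,f_n(c)\bigr)$, and that $f_i(c)\in A_i$ by the very definition of $A_i$. Hence the assignment $c\mapsto(f_1(c),\dots,f_n(c))$ sends $\gf_{q^n}$ into the product set $A_1\times\dots\times A_n$. Next I would invoke the injectivity of $\Phi$ to identify the image of $f$ with the image of this tuple map: if $f(c)=f(c')$, then applying $\Phi^{-1}$ forces $(f_1(c),\dots,f_n(c))=(f_1(c'),\dots,f_n(c'))$, and conversely equal tuples give equal values. Therefore
$$\sharp Im\, f=\sharp\bigl\{(f_1(c),\dots,f_n(c)):c\in\gf_{q^n}\bigr\}.$$
Since this set of realized tuples is contained in $A_1\times\dots\times A_n$, whose cardinality is $\prod_{i=1}^n\sharp A_i$, the inequality $\sharp Im\, f\le\prod_{i=1}^n\sharp A_i$ follows immediately.

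I expect no serious obstacle: the result is a soft consequence of the unique coordinatization afforded by the basis $u_1,\dots,u_n$. The only point that genuinely requires care is to keep straight which basis governs the reconstruction of $f(c)$ from its coordinates. The scalar functions are defined via the dual basis through $f_i(x)=\Tr(v_if(x))$, but it is the $u_i$, and hence the injectivity of $\Phi$ (equivalently, the $\gf_q$-linear independence of $u_1,\dots,u_n$), that underwrites the step from field values to coordinate tuples. It is also worth noting for later use that the inequality is strict exactly when the tuple map fails to be onto $A_1\times\dots\times A_n$, i.e. when not every admissible combination of coordinate values is simultaneously attained by a single argument $c$.
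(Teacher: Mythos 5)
Your proof is correct: the dual-basis decomposition $f(c)=\sum_{i=1}^n u_if_i(c)$ with $f_i(c)\in A_i$ places $Im\,f$ inside the image of $A_1\times\dots\times A_n$ under $(a_1,\dots,a_n)\mapsto\sum u_ia_i$, which immediately gives the cardinality bound (injectivity of that map is not even needed for the inequality, only for your remark about when it is strict). The paper itself states this lemma without proof, calling it simple, and your argument is exactly the intended one.
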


Now we have the following proposition:
\begin{prop}\label{prop1} Let the notations be as before. Then
$f(x)\in\gf_{q^n}[x]$ is a PP over $\gf_{q^n}$ if and only if  $\sharp f^{-1}(u_1a_1+\dots u_na_n)=\sharp \cap_{i=1}^nf_i^{-1}(a_i)=1$ for any $(a_1, \dots, a_n)\in\gf_q^n$.\end{prop}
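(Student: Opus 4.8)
The plan is to reduce the statement to the elementary fact that a self-map of a finite set is a bijection precisely when all of its fibers are singletons, after identifying the two sets whose cardinalities appear in the statement. First I would record the underlying set-theoretic identity. Fix a tuple $(a_1, \dots, a_n) \in \gf_q^n$. Since $u_1, \dots, u_n$ is a basis, every value has the unique coordinate expansion $f(c) = \sum_{j=1}^n u_j f_j(c)$, and the dual-basis relation $\Tr(v_i u_j) = \delta_{ij}$ recovers $f_i(c) = \Tr(v_i f(c))$. By uniqueness of coordinates, for every $c \in \gf_{q^n}$ the equation $f(c) = u_1 a_1 + \dots + u_n a_n$ holds if and only if $f_i(c) = a_i$ for all $i$. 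This gives the set equality
$$f^{-1}(u_1 a_1 + \dots + u_n a_n) = \bigcap_{i=1}^n f_i^{-1}(a_i),$$
valid for every tuple and independently of whether $f$ is a PP; in particular the first of the two equalities of cardinalities in the statement is automatic, so only the common value being $1$ carries content.

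Next I would invoke the bijection $(a_1, \dots, a_n) \mapsto u_1 a_1 + \dots + u_n a_n$ from $\gf_q^n$ onto $\gf_{q^n}$, which again holds because $u_1, \dots, u_n$ is a basis. Thus, as $(a_1, \dots, a_n)$ runs over $\gf_q^n$, the element $u_1 a_1 + \dots + u_n a_n$ runs exactly once over each element of $\gf_{q^n}$, so the indexed family $\{\, f^{-1}(u_1 a_1 + \dots + u_n a_n) : (a_1,\dots,a_n)\in\gf_q^n \,\}$ is precisely the family of all fibers of $f$.

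Finally I would assemble the equivalence. The polynomial $f$ is a PP of $\gf_{q^n}$ iff the induced self-map of the finite set $\gf_{q^n}$ is a bijection, iff every fiber $f^{-1}(y)$ with $y \in \gf_{q^n}$ has cardinality exactly $1$. By the previous paragraph this is equivalent to $\sharp f^{-1}(u_1 a_1 + \dots + u_n a_n) = 1$ for all $(a_1, \dots, a_n) \in \gf_q^n$, and combining with the set identity of the first paragraph yields the stated criterion. I expect no serious obstacle: the proposition is essentially a reformulation, and the only point requiring care is the bookkeeping of coordinates against the dual basis in the first step, which is routine once the defining relation $f_i(c) = a_i$ is unwound.
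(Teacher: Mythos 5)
Your proposal is correct and follows essentially the same route as the paper: establish the set identity $f^{-1}(u_1a_1+\dots+u_na_n)=\bigcap_{i=1}^n f_i^{-1}(a_i)$ from the basis/dual-basis coordinate expansion, then observe that $f$ is a PP iff all fibers are singletons. You merely spell out the bookkeeping (the coordinate uniqueness and the bijection $\gf_q^n\to\gf_{q^n}$) that the paper leaves implicit.
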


\begin{proof} Since  $u_1, \dots, u_n$ is a basis of $\gf_{q^n}$ over $\gf_q$, for any $(a_1, \dots, a_n)\in\gf_q^n$, we have
$$\sharp f^{-1}(u_1a_1+\dots u_na_n)=\sharp \cap_{i=1}^nf_i^{-1}(a_i).$$ Therefore, $f(x)\in\gf_{q^n}[x]$ is a PP over $\gf_{q^n}$ if and only if  $\sharp f^{-1}(u_1a_1+\dots u_na_n)=\sharp \cap_{i=1}^nf_i^{-1}(a_i)=1$ for any $(a_1, \dots, a_n)\in\gf_q^n$. This proves the proposition.\end{proof}

To state the next result, we need the following definition of $n$-to-1 mapping.

\begin{defn} Let $f$ be a mapping from one finite set $A$ to another finite set $B$. Then $f$
 is called $n$-to-1 if one of the following two cases holds:

 (1) if $n |\sharp A$, and for any $b\in B$, it has either $n$ or $0$ preimages of $f$ in $A$;

 (2) if $n\not|\sharp A$, and for almost all $b\in B$, it has either $n$ or $0$ preimages of $f$ in $A$, and for
the only one exception element, it has exactly $\sharp A\pmod{n}$ preimages. \end{defn}

\begin{prop}\label{prop2} If $f(x)\in\gf_{q^n}[x]$ is a PP over $\gf_{q^n}$, then the following two statements hold:

(1) $A_i=\gf_q, 1\le i\le n$;

(2) For $t\in\{1, 2, \dots, n\}$, the maps $\varphi_t(x): \gf_{q^n}\to \gf_q^t$  defined by
$$ \varphi_t(x)=(f_1(x), \dots, f_t(x))=(\Tr(v_1f(x)), \dots, \Tr(v_tf(x))),\quad 1\le t\le n,$$
 are $q^{n-t}$-to-1 mappings.\end{prop}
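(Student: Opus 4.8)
The plan is to reduce everything to a single observation: because $u_1,\dots,u_n$ is a basis of $\gf_{q^n}$ over $\gf_q$, the coordinate map $\Phi:\gf_{q^n}\to\gf_q^n$ sending $c=u_1a_1+\dots+u_na_n$ to $(a_1,\dots,a_n)$ is a $\gf_q$-linear bijection. By the very definition of the $f_i$, the full map $\varphi_n(x)=(f_1(x),\dots,f_n(x))$ is nothing but $\Phi\circ f$. Since $f$ is a PP, i.e.\ a bijection of $\gf_{q^n}$, and $\Phi$ is a bijection, I would first record that $\varphi_n=\Phi\circ f$ is a bijection from $\gf_{q^n}$ onto $\gf_q^n$. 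This is the only place where the hypothesis that $f$ is a PP is used; both parts of the proposition then follow by composing $\varphi_n$ with coordinate projections and counting fibers.

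For part (1), fix $i$ and let $\rho_i:\gf_q^n\to\gf_q$ be the projection onto the $i$-th coordinate, so that $f_i=\rho_i\circ\varphi_n$ as maps on $\gf_{q^n}$. As a composition of the surjection $\varphi_n$ with the surjection $\rho_i$, the map $f_i$ is surjective; hence its image satisfies $A_i=\gf_q$ for each $1\le i\le n$.

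For part (2), fix $t$ with $1\le t\le n$ and let $\pi_t:\gf_q^n\to\gf_q^t$ be the projection onto the first $t$ coordinates, so that $\varphi_t=\pi_t\circ\varphi_n$. For an arbitrary $b\in\gf_q^t$ I would compute the fiber as $\varphi_t^{-1}(b)=\varphi_n^{-1}\bigl(\pi_t^{-1}(b)\bigr)$; since $\varphi_n$ is a bijection this gives $\sharp\varphi_t^{-1}(b)=\sharp\pi_t^{-1}(b)=q^{\,n-t}$, because a point of $\gf_q^t$ has exactly $q^{\,n-t}$ preimages under $\pi_t$ (the last $n-t$ coordinates range freely). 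Thus every $b\in\gf_q^t$ has exactly $q^{\,n-t}$ preimages, never $0$; as $q^{\,n-t}$ divides $\sharp\gf_{q^n}=q^n$, this is precisely case (1) of the definition of a $q^{\,n-t}$-to-$1$ map, and the claim follows.

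I do not expect a genuine obstacle here: the entire content is the identity $\varphi_n=\Phi\circ f$ together with the bijectivity of the coordinate map $\Phi$. The only points that require a little care are making sure that the fiber count $q^{\,n-t}$ is uniform over all of $\gf_q^t$ (so that no exceptional element arises and we land in case (1) rather than case (2) of the $n$-to-$1$ definition), and checking that the endpoint $t=n$ correctly recovers the statement that $\varphi_n$ itself is a bijection, since $q^{\,n-n}=1$.
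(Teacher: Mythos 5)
Your proof is correct and in substance the same as the paper's: both arguments rest on the fact that $x\mapsto(f_1(x),\dots,f_n(x))$ is a bijection of $\gf_{q^n}$ onto $\gf_q^t\times\gf_q^{n-t}$ (this is exactly the paper's Proposition \ref{prop1}, which you repackage as $\varphi_n=\Phi\circ f$ with $\Phi$ the coordinate bijection) together with the observation that each fiber of the projection $\gf_q^n\to\gf_q^t$ has exactly $q^{n-t}$ points. The only cosmetic difference is in part (1), where the paper invokes Lemma \ref{le2.1} (the bound $\sharp\,\mathrm{Im}\,f\le\prod_{i=1}^n\sharp A_i$) and forces equality, whereas you note directly that $f_i=\rho_i\circ\Phi\circ f$ is a composition of surjections; both routes are valid and equally elementary.
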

\begin{proof} (1) Since $u_1, \dots, u_n$ is a basis of $\gf_{q^n}$ over $\gf_q$, by Lemma \ref{le2.1}, we have
$$\sharp Im f(x)\le\prod_{i=1}^n\sharp A_i.$$
Now the condition  $f(x)\in\gf_{q^n}[x]$ is a PP over $\gf_{q^n}$ implies that $\sharp Im f(x)=q^n$. Hence $\sharp A_i=q$ and $A_i=\gf_q$.

As for (2), for any $(a_1, \dots, a_t)\in\gf_q^t$, there are precisely $q^{n-t}$ elements $y=u_1y_1+\dots+u_ny_n\in\gf_{q^n}$ such that $y_1=a_1, \dots, y_t=a_t$ and $y_{t+1}, \dots, y_n\in\gf_{q}$. Hence
\begin{align*}\varphi_t^{-1}(a_1, \dots, a_t)&=\cap_{i=1}^tf_i^{-1}(a_i)\\
&=\{x\in\gf_{q^n}, \,\, f(x)=u_1a_1+\dots+u_ta_t+b_{t+1}u_{t+1}+\dots+b_nu_n, b_i\in\gf_q\}\\
&=\cup_{b_j\in\gf_q}\left(\cap_{i=1}^tf_i^{-1}(a_i)\cap_{j=t+1}^nf_j^{-1}(b_j)\right).\\
\end{align*}
It follows from Proposition \ref{prop1} that $\sharp \varphi_t^{-1}(a_1, \dots, a_t)=q^{n-t}$, as desired.
 \end{proof}
 Let $L$ be the set of all $q$-linear PPs over $\gf_{q^n}$, i.e.,
 $$L=\{l(x)=a_0x+a_1x^q+\dots+a_{n-1}x^{q^{n-1}}, \,\, a_i\in\gf_{q^n}, 1\le i\le n, l(x) \,\, \mbox{is a PP over}\,\, \gf_{q^n}\}.$$

 \begin{prop}\label{prop3}  If $f(x)\in\gf_{q^n}[x]$ is a PP over $\gf_{q^n}$ and $b_1, \dots, b_n\in\gf_{q^n}$, then the polynomial
 $$g(x)=b_1\Tr(v_1f(x))+\dots+b_n\Tr(v_nf(x))$$
 is a PP over $\gf_{q^n}$ if and only if $b_1, \dots, b_n$ is a basis of $\gf_{q^n}$ over $\gf_q$.\end{prop}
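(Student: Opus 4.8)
The plan is to reduce the statement to an elementary fact about a single $\gf_q$-linear map, exploiting the structural results already established. First I would rewrite $g$ purely in terms of the coordinate maps $f_i$. Since by definition $f_i(x)=\Tr(v_if(x))$, we have
$$g(x)=b_1f_1(x)+\dots+b_nf_n(x),$$
so $g$ is the composition $g=\Psi\circ\varphi_n$, where $\varphi_n(x)=(f_1(x),\dots,f_n(x))$ is the map of Proposition \ref{prop2}(2) and $\Psi:\gf_q^n\to\gf_{q^n}$ is the $\gf_q$-linear map defined by $\Psi(a_1,\dots,a_n)=b_1a_1+\dots+b_na_n$.

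The key input is that $\varphi_n$ is a \emph{bijection}. This is exactly the case $t=n$ of Proposition \ref{prop2}(2): since $f$ is a PP, the map $\varphi_n$ is $q^{n-n}=1$-to-$1$ from $\gf_{q^n}$ onto $\gf_q^n$, two sets of the same cardinality $q^n$. (Alternatively one sees this directly: because $f(x)=u_1f_1(x)+\dots+u_nf_n(x)$ with $u_1,\dots,u_n$ a basis, the bijectivity of $f$ is equivalent to the bijectivity of $x\mapsto(f_1(x),\dots,f_n(x))$.) Consequently $g=\Psi\circ\varphi_n$ is a permutation of $\gf_{q^n}$ if and only if $\Psi$ is a bijection.

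It then remains to characterize when $\Psi$ is a bijection. As $\Psi$ is a $\gf_q$-linear map between two $\gf_q$-vector spaces of the same finite dimension $n$ (equivalently, the same cardinality $q^n$), it is a bijection if and only if it is injective, which holds if and only if its image, namely the $\gf_q$-span of $b_1,\dots,b_n$, equals all of $\gf_{q^n}$; this in turn is equivalent to $b_1,\dots,b_n$ being linearly independent over $\gf_q$, i.e.\ forming a basis of $\gf_{q^n}$ over $\gf_q$. Combining the last two paragraphs gives the desired equivalence.

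I do not anticipate a genuine obstacle in this argument: once $g$ is written as $\Psi\circ\varphi_n$, the only substantive point is the bijectivity of $\varphi_n$, and that is handed to us by Proposition \ref{prop2}. Everything after the reduction is standard finite-dimensional linear algebra over $\gf_q$, so the proof should be short.
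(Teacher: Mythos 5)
Your proof is correct, but it takes a genuinely different route from the paper's. You factor $g=\Psi\circ\varphi_n$, observe that $\varphi_n=(f_1,\dots,f_n)$ is a bijection from $\gf_{q^n}$ onto $\gf_q^n$ (the $t=n$ case of Proposition \ref{prop2}(2), or directly from $f(x)=\sum u_if_i(x)$ with $u_1,\dots,u_n$ a basis), and then reduce everything to the standard fact that the $\gf_q$-linear map $\Psi(a_1,\dots,a_n)=\sum b_ia_i$ is bijective iff $b_1,\dots,b_n$ is a basis. The paper argues instead by counting: it composes $f$ with each $q$-linear PP $l\in L$ to produce $\sharp L=(q^n-1)(q^n-q)\cdots(q^n-q^{n-1})$ distinct PPs of the required shape, notes (via Theorem \ref{thYZ}) that this number equals the number of ordered bases $(b_1,\dots,b_n)$, and concludes by matching cardinalities. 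Your argument is more direct and self-contained --- it does not invoke the enumeration of linear PPs or Theorem \ref{thYZ}, and it yields the ``only if'' direction explicitly (a non-basis forces the image of $g$ into a proper $\gf_q$-subspace) rather than leaving it to a counting coincidence; the paper's version, in exchange, ties the proposition to the classification of linear PPs, which is the theme it is building on. Both are valid; yours is arguably the cleaner proof of this particular statement.
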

 \begin{proof}If $l(x)=a_0x+a_1x^q+\dots+a_{n-1}x^{q^{n-1}}\in L$, then
 $$l(f(x))=a_0f(x)+a_1f(x)^q+\dots+a_{n-1}F(x)^{q^{n-1}}=w_1\Tr(v_1f(x))+\dots+w_n\Tr(v_nf(x)), w_i\in\gf_{q^n}$$
 is also a PP over $\gf_{q^n}$. On the other hand, if $l_1(x)\in L$ and $l_1(x)\ne l(x)$, then $l_1(f(x))\ne l(f(x))$. Note that the number of $b_1, \dots, b_n\in\gf_{q^n}$ such that $b_1, \dots, b_n$ is a basis of $\gf_{q^n}$ over $\gf_q$ is equal to $(q^n-1)(q^n-q)\dots(q^n-q^{n-1})=\sharp L$. Hence $g(x)=b_1\Tr(v_1f(x))+\dots+b_n\Tr(v_nf(x))$
 is a PP over $\gf_{q^n}$ if and only if $b_1, \dots, b_n$ is a basis of $\gf_{q^n}$ over $\gf_q$. This proves the proposition.\end{proof}

The following two results of Yuan \cite{Yuan24} built an algebraic frame for computing the compositional inverse of PP.
\begin{lem}{\rm(\cite[Theorem 2.4]{Yuan24}) }\label{th2.1} A polynomial $f(x)\in\mathbb{F}_q[x]$ is a PP if and only if for any maps $\psi_i, \, i=1, \ldots, t, t\in\mathbb{N}$ (we also denote them as $\psi_i(x)\in\mathbb{F}_q[x]/(x^q-x)$) such that
$F(\psi_1(x), \ldots, \psi_t(x))=x$ for some polynomial $F(x_1, \ldots, x_t)\in \mathbb{F}_q[x_1, \ldots, x_t]$, there exists a polynomial $G(x_1, \ldots, x_t)\in \mathbb{F}_q[x_1, \ldots, x_t]$ satisfies $G(\psi_1(f(x)), \ldots, \psi_t(f(x)))=x$. Moreover, if $f(x)$ is a PP, then
$$f^{-1}(x)=G(\psi_1(x), \ldots, \psi_t(x)),$$
where $f^{-1}(x)$ denotes  the compositional inverse of $f(x)$.\end{lem}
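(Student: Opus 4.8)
The plan is to establish the two directions of the equivalence separately and then read off the inverse formula. The starting remark is that the hypothesis $F(\psi_1(x),\ldots,\psi_t(x))=x$, read as an identity of maps on $\mathbb{F}_q$, simply records that the tuple $(\psi_1,\ldots,\psi_t)$ admits the polynomial $F$ as a left inverse, and hence separates the points of $\mathbb{F}_q$. For the forward implication, I would suppose $f$ is a PP, so its compositional inverse $f^{-1}(x)\in\mathbb{F}_q[x]/(x^q-x)$ exists. Given any such $\psi_i$ and $F$, I would substitute the value $f(x)$ into the identity $F(\psi_1(y),\ldots,\psi_t(y))=y$, which is valid for every $y\in\mathbb{F}_q$, to obtain $F(\psi_1(f(x)),\ldots,\psi_t(f(x)))=f(x)$. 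Defining $G(x_1,\ldots,x_t):=f^{-1}(F(x_1,\ldots,x_t))$, which is again a genuine polynomial in $\mathbb{F}_q[x_1,\ldots,x_t]$ once $f^{-1}$ is taken in its reduced form, composition yields $G(\psi_1(f(x)),\ldots,\psi_t(f(x)))=f^{-1}(f(x))=x$, as required.

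For the reverse implication I only need to test the quantified hypothesis on one convenient instance. Choosing $t=1$, $\psi_1(x)=x$, and $F(x_1)=x_1$, the factorization $F(\psi_1(x))=x$ holds trivially, so the hypothesis produces a $G$ with $G(f(x))=x$ for all $x\in\mathbb{F}_q$. This realizes $G$ as a left inverse of the self-map $f$ of the finite set $\mathbb{F}_q$, which forces $f$ to be injective and hence bijective, that is, a PP.

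For the ``moreover'' statement I would invoke that $f$, now known to be a PP, is surjective onto $\mathbb{F}_q$. Given any $G$ satisfying $G(\psi_1(f(x)),\ldots,\psi_t(f(x)))=x$, I would write an arbitrary $y\in\mathbb{F}_q$ as $y=f(x)$ with $x=f^{-1}(y)$, so that $G(\psi_1(y),\ldots,\psi_t(y))=f^{-1}(y)$ holds for every $y$; this is precisely the claimed identity $f^{-1}(x)=G(\psi_1(x),\ldots,\psi_t(x))$. I anticipate no essential obstacle here: the only points requiring care are verifying that $G=f^{-1}\circ F$ remains a polynomial, which is ensured by reducing $f^{-1}$ modulo $x^q-x$, and noticing that the reverse direction consumes only the trivial choice $\psi_1(x)=x$ rather than the full strength of the universally quantified hypothesis.
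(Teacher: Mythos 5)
Your proof is correct. The paper itself states this lemma without proof, citing \cite[Theorem 2.4]{Yuan24}, so there is no internal argument to compare against; your treatment --- taking $G=f^{-1}\circ F$ for the forward direction, testing the hypothesis on the trivial instance $t=1$, $\psi_1(x)=x$, $F(x_1)=x_1$ for the converse, and using surjectivity of $f$ to convert $G(\psi_1(f(x)),\ldots,\psi_t(f(x)))=x$ into the inverse formula --- is the natural and standard argument for this statement, and all the steps check out.
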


\begin{lem}\label{th2.2} {\rm(\cite[Theorem 2.5]{Yuan24}) }Let $q$ be a prime power and $f(x)$ be a polynomial over $\mathbb{F}_q$. Then $f(x)$ is a PP if and only if there exist nonempty finite subsets $S_i, i=1, \ldots, t$  of $\mathbb{F}_q$ and maps $\psi_i: \mathbb{F}_q\to S_i, i=1, \ldots, t$  such that $\psi_i\circ f=\varphi_i, i=1, \ldots, t$ and $x=F(\varphi_1(x), \ldots, \varphi_t(x))$, where $F(x_1, \ldots, x_t)\in\mathbb{F}_q[x_1, \ldots, x_t]$. Moreover, we have
$$ f^{-1}(x)=F(\psi_1(x), \ldots, \psi_t(x)).$$\end{lem}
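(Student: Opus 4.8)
The plan is to prove the biconditional in its two directions and then read off the inverse formula from the very same identity. Throughout I would regard every map in sight---$f$, the $\psi_i$, and the $\varphi_i$---as an element of $\mathbb{F}_q[x]/(x^q-x)$, so that composition of maps on $\mathbb{F}_q$ coincides with polynomial substitution modulo $x^q-x$. This lets me treat the relation $\varphi_i=\psi_i\circ f$ and the polynomial identity $x=F(\varphi_1(x),\ldots,\varphi_t(x))$ on the same footing, and it is the only structural fact I need beyond finiteness of $\mathbb{F}_q$.

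For sufficiency, I would assume the data $S_i,\psi_i,\varphi_i,F$ are given, with $\varphi_i=\psi_i\circ f$ and with $x=F(\varphi_1(x),\ldots,\varphi_t(x))$ holding as an identity on $\mathbb{F}_q$, and show that $f$ is injective. Suppose $f(a)=f(b)$ for some $a,b\in\mathbb{F}_q$. Applying each $\psi_i$ to this equality gives $\varphi_i(a)=\psi_i(f(a))=\psi_i(f(b))=\varphi_i(b)$ for every $i$, whence $a=F(\varphi_1(a),\ldots,\varphi_t(a))=F(\varphi_1(b),\ldots,\varphi_t(b))=b$. Since $\mathbb{F}_q$ is finite, injectivity forces $f$ to be a PP.

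For necessity I only need to exhibit one admissible frame when $f$ is a PP; the trivial one suffices. Taking $t=1$, $S_1=\mathbb{F}_q$, $\psi_1=f^{-1}$, and $F(x_1)=x_1$ gives $\varphi_1=\psi_1\circ f=f^{-1}\circ f=x$, so that $x=F(\varphi_1(x))$ holds, proving the reverse implication and showing such a frame always exists. (Alternatively this is immediate from Lemma~\ref{th2.1} applied to the same trivial witness.) For the \emph{moreover} part I would substitute $\varphi_i=\psi_i\circ f$ back into the identity to obtain $x=F(\psi_1(f(x)),\ldots,\psi_t(f(x)))$ for all $x\in\mathbb{F}_q$. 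Because $f$ is a bijection, as $x$ runs over $\mathbb{F}_q$ so does $y=f(x)$; replacing $x$ by $f^{-1}(y)$ then yields $f^{-1}(y)=F(\psi_1(y),\ldots,\psi_t(y))$ for every $y\in\mathbb{F}_q$, which is exactly the claimed formula for $f^{-1}$.

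The argument is elementary, so the only real care is bookkeeping rather than any deep obstacle. I must keep the two readings of each $\varphi_i$ identified---as the prescribed map into $S_i$ and as the composite $\psi_i\circ f$---and I must interpret the polynomial identity pointwise on $\mathbb{F}_q$ before performing the reindexing $x\mapsto f^{-1}(y)$. That final substitution is legitimate precisely because a PP is surjective, which is why the inverse formula is tethered to the permutation hypothesis and does not hold for arbitrary $f$.
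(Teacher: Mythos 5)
Your proof is correct and complete. Note that the paper does not actually prove Lemma~\ref{th2.2}; it is quoted without proof from \cite[Theorem 2.5]{Yuan24}, so there is no in-paper argument to compare against. Your route --- deducing injectivity of $f$ from the identity $x=F(\varphi_1(x),\ldots,\varphi_t(x))$ for sufficiency, exhibiting the trivial witness $t=1$, $S_1=\mathbb{F}_q$, $\psi_1=f^{-1}$, $F(x_1)=x_1$ for necessity, and obtaining the inverse formula by the substitution $x\mapsto f^{-1}(y)$ --- is the standard proof of this ``local method'' criterion and is exactly the kind of argument the cited source gives.
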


As an application of Lemma \ref{th2.1}, we have

\begin{prop}\label{prop4} Let $u_1, \dots, u_n$ and $v_1, \dots, v_n$  be a dual pair of ordered bases  of $\gf_{q^n}$ over $\gf_q$, and let $f(x)\in\gf_{q^n}[x]$ be a polynomial. Then $f(x)$ is a PP over $\gf_{q^n}$ if and only if there exists $F(x_1, \ldots, x_n)\in\mathbb{F}_{q^n}[x_1, \ldots, x_n]$ such that
$$x=F(f_1(x), \ldots, f_n(x)).$$\end{prop}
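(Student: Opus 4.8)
The plan is to prove both implications, with the substantive direction ($f$ a PP $\Rightarrow$ existence of $F$) coming directly from the algebraic frame theorem, Lemma~\ref{th2.1}, applied over the field $\gf_{q^n}$ itself. The guiding observation is that the trace components $f_i$ are exactly the compositions of $f$ with the fixed maps $\psi_i(x)=\Tr(v_ix)$.

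First I would record the identity underlying the whole set-up. Since $u_1,\dots,u_n$ and $v_1,\dots,v_n$ are dual bases, every $y\in\gf_{q^n}$ satisfies $y=\sum_{i=1}^n u_i\Tr(v_iy)$. Writing $\psi_i(x)=\Tr(v_ix)$, viewed as elements of $\gf_{q^n}[x]/(x^{q^n}-x)$, this reads $x=F_0(\psi_1(x),\dots,\psi_n(x))$, where $F_0(x_1,\dots,x_n)=\sum_{i=1}^n u_ix_i\in\gf_{q^n}[x_1,\dots,x_n]$ is linear. The key point is that $\psi_i\circ f=\Tr(v_if(x))=f_i(x)$, so that the component maps $f_i$ coincide with $\psi_i(f(x))$.

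For the forward direction, assume $f$ is a PP over $\gf_{q^n}$. Applying Lemma~\ref{th2.1} over $\gf_{q^n}$ with the maps $\psi_1,\dots,\psi_n$ and the polynomial $F_0$ just chosen (which realizes $x=F_0(\psi_1(x),\dots,\psi_n(x))$), the lemma produces a polynomial $G(x_1,\dots,x_n)\in\gf_{q^n}[x_1,\dots,x_n]$ with $G(\psi_1(f(x)),\dots,\psi_n(f(x)))=x$. By the previous remark this is precisely $G(f_1(x),\dots,f_n(x))=x$, so taking $F=G$ finishes this direction; as a by-product, the ``moreover'' part of Lemma~\ref{th2.1} even yields $f^{-1}(x)=G(\Tr(v_1x),\dots,\Tr(v_nx))$. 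For the converse, suppose such an $F$ exists. I would argue injectivity directly: if $f(a)=f(b)$ for $a,b\in\gf_{q^n}$, then $f_i(a)=\Tr(v_if(a))=\Tr(v_if(b))=f_i(b)$ for every $i$, whence $a=F(f_1(a),\dots,f_n(a))=F(f_1(b),\dots,f_n(b))=b$. Thus $f$ is injective on the finite set $\gf_{q^n}$, hence a bijection, i.e.\ a PP.

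The main thing to get right is the translation between the abstract frame of Lemma~\ref{th2.1} and the concrete component maps: one must check that the trace functions $\psi_i=\Tr(v_i\,\cdot)$ are admissible choices (they are maps $\gf_{q^n}\to\gf_q\subseteq\gf_{q^n}$, hence legitimate elements of $\gf_{q^n}[x]/(x^{q^n}-x)$) and that the dual-basis decomposition supplies the required relation $x=F_0(\psi_1(x),\dots,\psi_n(x))$. Once these identifications are in place there is no genuine obstacle: the forward direction is an immediate instance of Lemma~\ref{th2.1} over $\gf_{q^n}$, and the converse is an elementary injectivity argument on a finite set.
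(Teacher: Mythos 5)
Your proposal is correct and follows essentially the same route as the paper: both apply Lemma~\ref{th2.1} over $\gf_{q^n}$ with $\psi_i(x)=\Tr(v_ix)$ and the dual-basis identity $x=\sum_i u_i\psi_i(x)$, so that $\psi_i\circ f=f_i$. The only cosmetic difference is that you prove the converse by a direct injectivity argument on the finite set, whereas the paper reads both directions off the ``if and only if'' in Lemma~\ref{th2.1}.
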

\begin{proof}Let $\psi_i(x)=\Tr(v_ix), 1\le i\le n$. Then
$$\varphi_i(x)=\psi_i(f(x))=\Tr(v_if(x))=f_i(x), \quad 1\le i\le n.$$
Note that $u_1, \dots, u_n$ and $v_1, \dots, v_n$  are a dual pair of ordered bases  of $\gf_{q^n}$ over $\gf_q$, we have
$$x=u_1\Tr(v_1x)+\dots+u_n\Tr(v_nx)=u_1\psi_1(x)+\dots+u_n\psi_n(x).$$
By Lemma \ref{th2.1}, $f(x)$ is a PP over $\gf_{q^n}$ if and only if there exists $F(x_1, \ldots, x_n)\in\mathbb{F}_{q^n}[x_1, \ldots, x_n]$ such that
$x=F(\varphi_1(x), \dots, \varphi_n(x))=F(f_1(x), \ldots, f_n(x))$. This proves the proposition.\end{proof}
The following proposition is an inverse result of Proposition \ref{prop2} (2).
\begin{prop}\label{pro2.5}Let $t, n>0$ be  positive integers with $t\le n$ and $g_1(x), \dots, g_t(x)$ be maps from $\gf_{q^n}$ to $\gf_q$ such that the map
$$\varphi_t: \gf_{q^n} \to \gf_q^t,\quad \varphi_t(x)=(g_1(x), \dots, g_t(x))$$
is a $q^{n-t}$-to-1 mapping. Then, for any basis $u_1, \dots, u_n$ of $\gf_{q^n}$ over $\gf_q$, there exists a PP $f(x)$ over $\gf_{q^n}$ satisfying
$$f(x)=u_1g_1(x)+\dots+u_tg_t(x)+u_{t+1}f_{t+1}(x)+\dots+u_nf_n(x)$$
and $f_i(x), t+1\le i\le n$ are some maps from $\gf_{q^n}$ to $\gf_q$.\end{prop}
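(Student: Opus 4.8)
The plan is to first extract, from the $q^{n-t}$-to-1 hypothesis, the precise combinatorial structure of the fibers of $\varphi_t$, and then to extend $g_1, \dots, g_t$ to a complete coordinate system by choosing an arbitrary bijection on each fiber. First I would note that since $t\le n$, we have $q^{n-t}\mid q^n=\sharp\gf_{q^n}$, so case (1) of the definition of a $q^{n-t}$-to-1 map applies: every point of $\gf_q^t$ has either $q^{n-t}$ or $0$ preimages under $\varphi_t$. Counting, if $k$ denotes the number of points of $\gf_q^t$ whose fiber is nonempty, then $k\cdot q^{n-t}=q^n$, whence $k=q^t=\sharp\gf_q^t$. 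Therefore no fiber is empty; that is, $\varphi_t$ is surjective and each fiber $\varphi_t^{-1}(a_1,\dots,a_t)$ has cardinality exactly $q^{n-t}$.

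Next I would use these fibers to build the missing coordinates. For each $a=(a_1,\dots,a_t)\in\gf_q^t$, the fiber $\varphi_t^{-1}(a)$ has exactly $q^{n-t}$ elements, and $\gf_q^{n-t}$ also has exactly $q^{n-t}$ elements, so I may fix an arbitrary bijection $\sigma_a:\varphi_t^{-1}(a)\to\gf_q^{n-t}$. Writing $\sigma_a(x)=(f_{t+1}(x),\dots,f_n(x))$ for $x\in\varphi_t^{-1}(a)$ and letting $a$ range over $\gf_q^t$ defines, on the disjoint union of all fibers (which is all of $\gf_{q^n}$), well-defined maps $f_{t+1},\dots,f_n:\gf_{q^n}\to\gf_q$. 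Then the combined map
$$\Phi:\gf_{q^n}\to\gf_q^n,\qquad \Phi(x)=(g_1(x),\dots,g_t(x),f_{t+1}(x),\dots,f_n(x))$$
is a bijection: on each fiber $\varphi_t^{-1}(a)$ it maps bijectively onto $\{a\}\times\gf_q^{n-t}$ by construction, and distinct $a$ produce disjoint slices, so $\Phi$ is bijective onto $\gf_q^n$.

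Finally, since $u_1,\dots,u_n$ is a basis, the coordinate map $\gf_q^n\to\gf_{q^n}$ sending $(a_1,\dots,a_n)\mapsto u_1a_1+\dots+u_na_n$ is a bijection; composing it with $\Phi$ yields a bijection $f:\gf_{q^n}\to\gf_{q^n}$ satisfying
$$f(x)=u_1g_1(x)+\dots+u_tg_t(x)+u_{t+1}f_{t+1}(x)+\dots+u_nf_n(x).$$
As every map from $\gf_{q^n}$ to itself is represented by a polynomial in $\gf_{q^n}[x]/(x^{q^n}-x)$, this $f$ is the desired PP, and by construction $f_i=g_i$ for $1\le i\le t$. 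I do not anticipate a genuine obstacle: the proof is essentially a fiber-wise extension argument. The only point requiring care is the counting step that upgrades the $q^{n-t}$-to-1 condition to honest surjectivity with equal fiber sizes, since without $q^{n-t}\mid q^n$ one would be forced into the exceptional-fiber clause of the definition; the hypothesis $t\le n$ is exactly what rules that out.
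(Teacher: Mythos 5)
Your proposal is correct and follows essentially the same route as the paper's proof: partition $\gf_{q^n}$ into the fibers of $\varphi_t$, assign to each fiber a bijection onto the coset $u_1a_1+\dots+u_ta_t+\langle u_{t+1},\dots,u_n\rangle$, and read off $f_{t+1},\dots,f_n$ as the resulting coordinates. Your explicit counting step showing that every fiber is nonempty of size exactly $q^{n-t}$ is a small point the paper asserts without justification, but it does not change the argument.
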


\begin{proof} By the assumptions, we have
$$\gf_{q^n}=\cup_{(a_1, \dots, a_t)\in\gf_q^t}\varphi^{-1}((a_1, \dots, a_t)).$$
For $x\in\varphi^{-1}((a_1, \dots, a_t))$, we define
$$f(x)=u_1a_1+\dots+u_ta_t+b_{t+1}u_{t+1}+\dots+b_nu_n, \quad b_i\in\gf_q, t+1\le i\le n$$
and $f(x)\ne f(y)$ if $x, y\in \varphi^{-1}((a_1, \dots, a_t))$ and $x\ne y$. This is possible since
$$\sharp \varphi^{-1}((a_1, \dots, a_t))=\sharp\{b_{t+1}u_{t+1}+\dots+b_nu_n, \quad b_i\in\gf_q, t+1\le i\le n\}=q^{n-t}.$$
It follows that the polynomial $f(x)$ defined as above is injective, hence a PP over $\gf_{q^n}$. Moreover, we have $f(x)=u_1g_1(x)+\dots+u_tg_t(x)+u_{t+1}f_{t+1}(x)+\dots+u_nf_n(x)$, where $f_i(x)=b_i, t+1\le i\le n$ are some maps from $\gf_{q^n}$ to $\gf_q$. This completes the proof.\end{proof}

\begin{rem} It follows from the proof of Proposition \ref{pro2.5} that the number of such PPs is equal to $(q^{n-t}!)^{q^t}$ since $\sharp \varphi^{-1}((a_1, \dots, a_t))=q^{n-t}$ and there are $q^t$ choices for such $(a_1, \dots, a_t)$. Moreover, there exists a PP and a basis $v_1, \dots, v_n$ of $\gf_{q^n}$ over $\gf_q$ such that $g_i(x)=\Tr(v_if(x)), 1\le i\le t$.
\end{rem}
\section{ Proof of Theorem 1.2}
In this section, we will prove Theorem 1.2.

{\bf Proof of Theorem 1.2:}

\begin{proof} Since $F(x)$ is a PP and $F(x)\in\langle u_1, \dots, u_n\rangle$ for any $x\in\gf_{q^n}$, we have $\langle u_1, \dots, u_n\rangle=\gf_{q^n}$, so $u_1, \dots, u_n$ is a basis of $\gf_{q^n}$ over $\gf_q$ and $h_i(x)\in\gf_q[x], 1\le i\le n$ are PPs over $\gf_{q}$. This proves the necessity.

Now we prove the sufficiency. For any $(a_1, \dots, a_n)\in\gf_q^n$, since $u_1, \dots, u_n$ is a basis of $\gf_{q^n}$ over $\gf_q$ and $h_i(x)\in\gf_q[x], 1\le i\le n$ are PPs over $\gf_{q}$, we have
$$\cap_{i=1}^n\left(h_i\circ f_i\right)^{-1}(a_i)=\cap_{i=1}^nf_i^{-1}\left(h_i^{-1}(a_i)\right).$$
By Proposition \ref{prop1} and the fact that $f(x)$ is a PP over $\gf_{q^n}$, we have
$$\sharp\cap_{i=1}^n\left(h_i\circ f_i\right)^{-1}(a_i)=1.$$
It follows that $F(x)$ is a PP over $\gf_{q^n}$ by Proposition \ref{prop1} again.

Let $\psi_i(x)=\Tr(b_ix), 1\le i\le n$, where $b_1, \dots, b_n$ is the dual base of $a_1, \dots, a_n$. Then we have
$$\varphi_i(x)=\psi_i(F(x))=h_i(f_i(x))=h_i(\Tr(v_if(x))), 1\le i\le n.$$
Hence $\Tr(v_if(x))=h_i^{-1}(\varphi_i(x)), 1\le i\le n$. Since $u_1, \dots, u_n$ is the dual base of $v_1, \dots, v_n$, we have
$$f(x)=\sum_{i=1}^nu_i\Tr(v_if(x))=\sum_{i=1}^nu_ih_i^{-1}(\varphi_i(x)).$$
It follows that $x=f^{-1}\left(\sum_{i=1}^nu_ih_i^{-1}(\varphi_i(x))\right)$, by Lemma \ref{th2.2}, we get
$$F^{-1}=f^{-1}\left(\sum_{i=1}^nu_ih_i^{-1}(\Tr(b_ix))\right).$$
This proves the theorem.\end{proof}

\begin{rem} It follows from Theorem \ref{thnew0} that the number of such PPs is $(q^n-1)\dots(q^n-q^{n-1})(q!)^n$.\end{rem}

As  immediately consequences of Theorem \ref{thnew0}, we have the following  corollary.

\begin{coro}\label{coro0}If $f(x)\in\gf_{q^n}[x]$ is a PP over $\gf_{q^n}$ and $m_i\in\mathbb{N}, 1\le i\le n$ are positive integers, then the polynomial
$$F(x)=a_1(f_1(x))^{m_1}+\dots+a_n(f_n(x))^{m_n}, \quad a_i\in\gf_{q^n}, 1\le i\le n$$
is a PP over $\gf_{q^n}$ if and only if the following two conditions hold:

(1) $\gcd(m_1\dots m_n, q-1)=1$;

(2) $a_1, \dots, a_n$ is a basis of $\gf_{q^n}$ over $\gf_q$.
\end{coro}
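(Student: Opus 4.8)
The plan is to derive this corollary directly from Theorem~\ref{thnew0} by specializing the maps $h_i(x)$ to the monomials $x^{m_i}$. First I would set $h_i(x)=x^{m_i}\in\gf_q[x]$ for $1\le i\le n$, so that $h_i(f_i(x))=(f_i(x))^{m_i}$ and the polynomial $F(x)$ in this corollary is exactly the polynomial $F(x)=a_1h_1(f_1(x))+\dots+a_nh_n(f_n(x))$ appearing in Theorem~\ref{thnew0}. By that theorem, $F(x)$ is a PP over $\gf_{q^n}$ if and only if (1) $a_1,\dots,a_n$ is a basis of $\gf_{q^n}$ over $\gf_q$, and (2) each $h_i(x)=x^{m_i}$ is a PP over $\gf_q$. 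Condition~(1) of the theorem is precisely condition~(2) of the corollary, so the remaining task is to translate condition~(2) of the theorem into condition~(1) of the corollary.

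The key step is the standard characterization of monomial permutations: the map $x\mapsto x^{m}$ is a PP over $\gf_q$ if and only if $\gcd(m,q-1)=1$. I would cite this well-known fact (e.g.\ from Lidl and Niederreiter~\cite{LN86}). Applying it to each $i$, condition~(2) of the theorem holds if and only if $\gcd(m_i,q-1)=1$ for every $i$ with $1\le i\le n$.

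The final step is to observe that the simultaneous conditions $\gcd(m_i,q-1)=1$ for all $i$ are equivalent to the single condition $\gcd(m_1\cdots m_n,\,q-1)=1$. This is an elementary number-theoretic equivalence: a product of integers is coprime to $q-1$ precisely when each factor is coprime to $q-1$, since any common prime divisor of some $m_i$ and $q-1$ would divide the product, and conversely any prime dividing both the product and $q-1$ must divide at least one factor. Thus condition~(2) of the theorem is equivalent to condition~(1) of the corollary, and the two characterizations coincide.

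I do not anticipate any genuine obstacle here, as the entire argument is a specialization of the already-proved Theorem~\ref{thnew0} combined with classical facts. The only point requiring mild care is the equivalence between the individual coprimality conditions and the coprimality of the product; I would state this explicitly to keep the argument self-contained, but it is routine.
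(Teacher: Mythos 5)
Your proposal is correct and matches the paper's intent exactly: the paper presents this corollary as an immediate consequence of Theorem~\ref{thnew0}, obtained precisely by taking $h_i(x)=x^{m_i}$ and invoking the classical fact that $x^m$ permutes $\gf_q$ if and only if $\gcd(m,q-1)=1$, together with the observation that this holds for all $i$ simultaneously if and only if $\gcd(m_1\cdots m_n,q-1)=1$. Your write-up simply makes explicit the routine steps the paper leaves to the reader.
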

\begin{exmp}Let $q$ be a prime power with $q\equiv3\pmod{4}$. Then $x^2+1$ is irreducible over $\gf_q$, so $\frac{1}{2}, \frac{i}{2}(i=\sqrt{-1})$ and $\frac{1}{2}, -\frac{i}{2}$ are a dual pair of ordered bases  of $\gf_{q^2}$ over $\gf_q$. Hence $\Tr(ix)=ix+(ix)^q=i(x-x^q)$, and
$$2x=\Tr(x)-i\Tr(ix)=(x+x^q)+(x-x^q).$$
By Corollary \ref{coro0}, for an odd positive integer $m$,
$$\left(\Tr(x)\right)^m-i\left(\Tr(ix)\right)^m=(x+x^q)^m+(x-x^q)^m$$
is a PP over $\gf_{q^2}$ if and only if $\gcd(m, q-1)=1$. \end{exmp}
For example, let $m=3$, we have
$$x^3+3x^{1+2q}$$
is a PP over $\gf_{q^2}$ if and only if $q\equiv3\pmod{4}$ and $\gcd(3, q-1)=1$. Let $m=5$, we have
$$x^5+10x^{3+2q}+5x^{1+4q}$$
is a PP over $\gf_{q^2}$ if and only if $q\equiv3\pmod{4}$ and $\gcd(5, q-1)=1$. Let $m=11$, we have
$$x^{11}+55x^{9+2q}+330x^{7+4q}+462x^{5+6q}+165x^{3+8q}+11x^{1+10q}$$
is a PP over $\gf_{q^2}$ if and only if $q\equiv3\pmod{4}$ and $\gcd(11, q-1)=1$. In particular,
$$x^{11}+x^{9+2\cdot3^m}-x^{1+10\cdot3^m}$$
is a PP over $\gf_{3^{2m}}$ if and only if $\gcd(m, 10)=1$ since $\gcd(11, 3^m-1)=1$ if and only if $5\not|m$.

If $m=p^r+2<q$, $q\equiv3\pmod{4}$ and $\gcd(p^r+2, q-1)=1$, then we have
$$\frac{1}{2}\left((x+x^q)^m+(x-x^q)^m\right)=x^{p^r+2}+x^{p^r+2q}+2x^{1+(p^r+1)q}$$
is a PP over $\gf_{q^2}$.

\begin{exmp}Let $q$ be a prime power with such that $-3$ is not a square in $\gf_q$, i.e. $q\equiv5\pmod{12}$ or $q\equiv 11\pmod{12}$ or $q=2^t$ with odd positive integer $t$. Then $x^2+x+1$ is irreducible over $\gf_q$, let $\omega$ be a root of $x^2+x+1$ in $\gf_{q^2}$ and $\omega^3=1$. If $q\equiv5, 11\pmod{12}$, then $1, \omega$ is a basis  of $\gf_{q^2}$ over $\gf_q$ and $\Tr(\omega x)=\omega x+(\omega x)^q=\omega(x+\omega x^q)$, and by Corollary 1.1,
 for an odd positive integer $m$,
$$\left(\Tr(x)\right)^m-\omega\left(\Tr(\omega x)\right)^m=(x+x^q)^m-\omega\left(\omega x+\omega^2 x^q\right)^m$$
is a PP over $\gf_{q^2}$ if and only if $\gcd(m, q-1)=1$. \end{exmp}

If $p=2$ is the characteristic of $\gf_q$ and $r$ an odd positive integer, $m=2^r+1$. Then $3|m$, $\gcd(2^r+1, q)=\gcd(2^r+1, 2^t-1)=1$ and
$$\left(\Tr(x)\right)^m+\omega\left(\Tr(\omega x)\right)^m=\omega^2x^{2^r+1}+\omega^2x^{q(2^r+1)}+\omega x^{2^r+q}$$
is a PP over $\gf_{q^2}$. That is, $x^{2^r+1}+x^{q(2^r+1)}+\omega^2 x^{2^r+q}$ is a PP over $\gf_{q^2}$.

\section{PPs of the shape $G(x) + \gamma\Tr(H(x))$}

In this section,  we consider the polynomials of the shape
\begin{equation}\label{eq1}F (x) = G(x) + \gamma\Tr(H(x)),\end{equation}
where $G(x), H(x)\in\gf_{q^n}[x], \gamma\in\gf_{q^n}$. In \cite{CK09}, Charpin and Kyureghyan characterized and constructed such permutation polynomials. Moreover, they proposed the following problem.

{\bf  Problem.} {\rm \cite[Open Problem 1]{CK09}} Characterize a class of permutation polynomials of type (\ref{eq1}), where $G(X)$ is neither
a permutation nor a linearized polynomial.

By Proposition \ref{pro2.5}, we have the following corollary.

\begin{coro}\label{coro3} If $H(x)\in[x]$ is a polynomial such that $\Tr(H(x))$ is a $q^{n-1}$-to-1 mapping, $\gamma\in\gf_{q^n}^\star$, then the number of PPs of the type
$$u_1f_1(x) +\dots +u_{n-1}f_{n-1}(x)+\gamma \Tr(H(x)),$$
where $f_i(x), 1\le i\le n-1$ are maps from $\gf_{q^n}$ to $\gf_q$, is equal to $(q^{n-1}!)^q$.\end{coro}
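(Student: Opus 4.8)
The plan is to view $\operatorname{Tr}(H(x))$ as the last coordinate function in a decomposition and to apply Proposition~\ref{pro2.5} with $t=n-1$, counting via the remark that follows it. First I would set $g_{n}(x)=\gamma^{-1}\cdot\operatorname{Tr}(H(x))$ or, more directly, incorporate $\gamma\operatorname{Tr}(H(x))$ as the $u_n$-component of a dual basis; the key observation is that since $\gamma\in\gf_{q^n}^\star$, the single element $\gamma$ can be extended to a basis $u_1,\dots,u_{n-1},u_n=\gamma$ of $\gf_{q^n}$ over $\gf_q$, so that the prescribed polynomial is exactly of the form
$$
u_1f_1(x)+\dots+u_{n-1}f_{n-1}(x)+u_n\,\operatorname{Tr}(H(x)),
$$
with the last coordinate map fixed to be $\operatorname{Tr}(H(x))$ and the first $n-1$ coordinate maps free.

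Next I would apply Proposition~\ref{pro2.5} in the case $t=n-1$, but with a relabeling so that the \emph{fixed} coordinate is the last one rather than the first $t$; this is a purely cosmetic permutation of the basis vectors and coordinate functions, so the proposition applies verbatim once I check its hypothesis. The hypothesis needed is that the map $\varphi:\gf_{q^n}\to\gf_q$ given by $\varphi(x)=\operatorname{Tr}(H(x))$ is $q^{n-(n-1)}=q^{n-1}$-to-$1$, which is exactly the standing assumption on $H$. Granting this, Proposition~\ref{pro2.5} guarantees that each admissible choice of values for the free maps yields a genuine PP of the stated shape, and conversely every PP of this shape arises this way since its $u_n$-coordinate is forced to equal $\operatorname{Tr}(H(x))$.

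For the count I would invoke the Remark following Proposition~\ref{pro2.5}, which states that the number of PPs extending a fixed $q^{n-t}$-to-$1$ map on $t$ coordinates is $(q^{n-t}!)^{q^t}$. Here $t=n-1$, so $q^{n-t}=q$ and $q^t=q^{n-1}$, giving $(q!)^{q^{n-1}}$; however the corollary's stated answer is $(q^{n-1}!)^{q}$, so I would be careful about which index is being extended. The discrepancy signals that the roles of $t$ and $n-t$ must be read correctly: we fix \emph{one} coordinate (the trace) and extend the remaining $n-1$, so in the language of the Remark the extended block has dimension $n-1$, the fibers of the fixed $1$-dimensional map have size $q^{n-1}$, there are $q$ such fibers, and the free assignment within each fiber is an arbitrary bijection onto a set of size $q^{n-1}$. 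This yields precisely $(q^{n-1}!)^{q}$, matching the statement, and I would state this fiber-counting explicitly rather than quoting the Remark's formula, since the indexing convention is the main obstacle to getting the exponent right. The remaining verification that distinct fiberwise bijections produce distinct PPs, and that every such PP is injective, follows immediately from the construction in the proof of Proposition~\ref{pro2.5}.
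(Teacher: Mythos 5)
Your proposal is correct and follows essentially the paper's own route: the corollary is obtained by applying Proposition~\ref{pro2.5} with the single fixed coordinate map $\Tr(H(x))$ (i.e.\ $t=1$ after relabelling so that the fixed coordinate sits in the $\gamma$-component of the basis $u_1,\dots,u_{n-1},\gamma$) together with the counting Remark, which gives $(q^{n-t}!)^{q^t}=(q^{n-1}!)^q$ directly. Your initial choice $t=n-1$ misreads which block is fixed --- in Proposition~\ref{pro2.5} the $t$ indexes the \emph{prescribed} coordinates, not the free ones --- but your subsequent fiber-by-fiber count ($q$ fibers of size $q^{n-1}$, an arbitrary bijection on each) corrects this and is exactly the intended argument.
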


We have

\begin{thm}\label{th3} If $H(x)\in\gf_{q^n}[x]$ is a polynomial such that $\Tr(H(x))$ is not a constant map, $\gamma\in\gf_{q^n}^\star$, then there are at least $(q^{n-1}!)^q-(q^n-q)\dots(q^n-q^{n-1})$ PPs of the form $u_1f_1(x) +\dots +u_{n-1}f_{n-1}(x)+\gamma \Tr(H_1(x))+\gamma \Tr(H(x))$ such that $G(x)=u_1f_1(x) +\dots +u_{n-1}f_{n-1}(x)+\gamma \Tr(H_1(x))$ is neither a permutation nor a linearized polynomial.\end{thm}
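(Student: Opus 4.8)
The strategy is to produce the permutations $F$ as a subfamily of those furnished by Corollary~\ref{coro3}, and then to discard the relatively few members whose truncation $G$ is itself a permutation or is linearized. Throughout I work in the ordered basis $u_1,\dots,u_{n-1},\gamma$ of $\gf_{q^n}$ over $\gf_q$, in which
$$F(x)=u_1f_1(x)+\dots+u_{n-1}f_{n-1}(x)+\gamma\Tr\big(H_1(x)+H(x)\big)$$
has coordinate functions $(f_1,\dots,f_{n-1},\Tr(H_1+H))$, while $G(x)=u_1f_1(x)+\dots+u_{n-1}f_{n-1}(x)+\gamma\Tr(H_1(x))$ has coordinate functions $(f_1,\dots,f_{n-1},\Tr(H_1))$. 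By Proposition~\ref{prop2}(2) a necessary condition for $F$ to be a PP is that its last coordinate $\Tr(H_1+H)$ be $q^{n-1}$-to-$1$. I therefore first fix $H_1$ so that $\Tr(H_1)$ is a prescribed nonzero $\gf_q$-linear functional $\Tr(\theta x)$, $\theta\neq 0$, while $\Tr(H_1+H)=\Tr(\theta x)+\Tr(H)$ is $q^{n-1}$-to-$1$. The existence of such an $H_1$ rests on the fact that an arbitrary $\gf_q$-valued function on $\gf_{q^n}$ is realizable as a trace $\Tr(H_1)$, and this is where the hypothesis that $\Tr(H)$ is not constant enters, guaranteeing that $\gamma\Tr(H)$ is a genuine non-constant perturbation separating $G$ from $F$.

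With $H_1$ fixed, Corollary~\ref{coro3} (equivalently Proposition~\ref{pro2.5} with $t=1$) gives exactly $(q^{n-1}!)^q$ permutations $F$ of the stated shape, in bijection with the admissible choices of $(f_1,\dots,f_{n-1})$. The theorem follows once I bound the number of these $F$ for which $G$ is a permutation or linearized. The subtracted quantity is precisely a count from Theorem~\ref{thYZ}: since $\Tr(H_1)=\Tr(\theta x)$ is a fixed nonzero linear functional, a linearized $G$ of this shape is $G=\Tr(\theta_1x)u_1+\dots+\Tr(\theta_{n-1}x)u_{n-1}+\Tr(\theta x)\gamma$, which by Theorem~\ref{thYZ} is a permutation exactly when $\theta_1,\dots,\theta_{n-1},\theta$ is a basis; the number of ways to extend the fixed nonzero $\theta$ to an ordered basis is $(q^n-q)(q^n-q^2)\cdots(q^n-q^{n-1})$, the term being subtracted.

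The heart of the argument is the inequality $\#\{F:\ G\ \text{is a PP or linearized}\}\le (q^n-q)\cdots(q^n-q^{n-1})$. I would establish it by showing that, for the above choice of $H_1$, every admissible $(f_1,\dots,f_{n-1})$ whose associated $G$ is a permutation or is linearized actually makes $G$ a \emph{linear} permutation, so that the bad set injects into the linear permutations with prescribed last coordinate $\Tr(\theta x)$ counted above. Concretely, using Proposition~\ref{prop1} and the coordinate description of Section~2: if $G$ is linearized and $F$ is a PP, the balancedness of $\Tr(H_1+H)$ forces $f_1,\dots,f_{n-1}$ to be linearly independent functionals (otherwise the fibres of $(f_1,\dots,f_{n-1})$ are too large for $\Tr(H_1+H)$ to be injective on them), whence $G$ is already a linear bijection; the case where $G$ is a permutation is then treated through the joint structure of the two fixed coordinates $\Tr(H_1)$ and $\Tr(H_1+H)$.

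I expect this last reduction to be the main obstacle. The delicate point is to exclude \emph{accidental} non-linear permutations $G$ inside the family, that is, pairs $(F,G)$ of permutations sharing the first $n-1$ coordinates but with non-linear $f_i$: a priori such $G$ need not be linear and could inflate the bad count beyond $(q^n-q)\cdots(q^n-q^{n-1})$, so controlling them is exactly where the specific choice of $H_1$ (with $\Tr(H_1)$ linear while $\Tr(H_1+H)$ is balanced) and the non-constancy of $\Tr(H)$ must be exploited, via the fibrewise analysis behind Propositions~\ref{prop1} and~\ref{prop2}. Once the bad set is shown to inject into those $(q^n-q)\cdots(q^n-q^{n-1})$ linear permutations, subtracting from the family size $(q^{n-1}!)^q$ yields the stated lower bound, and by construction the surviving $F$ have $G$ neither a permutation nor a linearized polynomial, answering the open problem.
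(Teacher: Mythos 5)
There is a genuine gap, and it originates in your choice of $H_1$. You require $\Tr(H_1)$ to equal a fixed nonzero linear functional $\Tr(\theta x)$ while simultaneously $\Tr(H_1+H)=\Tr(\theta x)+\Tr(H(x))$ is $q^{n-1}$-to-$1$. Once $\Tr(H_1)$ is pinned to $\Tr(\theta x)$ there is no freedom left, and such a $\theta$ need not exist: if, say, $\Tr(H(x))$ vanishes except at a single point $x_0$ where it equals $c\neq 0$ (a non-constant map, so allowed by the hypothesis), then for \emph{every} $\theta$ the sum $\Tr(\theta x)+\Tr(H(x))$ agrees with the balanced map $\Tr(\theta x)$ off $x_0$, so one fibre has $q^{n-1}-1$ elements and another has $q^{n-1}+1$; it is never $q^{n-1}$-to-$1$ and Corollary~\ref{coro3} cannot be invoked. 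The second, independent gap is the one you yourself flag: because your $\Tr(H_1)=\Tr(\theta x)$ is balanced, $G$ has a balanced last coordinate and can perfectly well be a permutation with non-linear $f_1,\dots,f_{n-1}$ (choose the fibres of $(f_1,\dots,f_{n-1})$ so that both $\Tr(\theta x)$ and $\Tr(\theta x)+\Tr(H(x))$ are injective on each fibre), so the bad set need not inject into the $(q^n-q)\cdots(q^n-q^{n-1})$ linear permutations, and the claimed inequality is not merely unproved but in doubt.

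The paper's proof goes the opposite way on exactly this point. When $\Tr(H)$ is already $q^{n-1}$-to-$1$ it takes $H_1=0$, so $G$ maps into the proper subspace spanned by $u_1,\dots,u_{n-1}$ and is automatically not a permutation. When $\Tr(H)$ is neither balanced nor constant, it builds $H_1$ by hand (redistributing the fibres of $\Tr(H)$, using the non-constancy to find two distinct values $a\neq b$ with nonempty preimages) so that $\Tr(H_1+H)$ \emph{is} $q^{n-1}$-to-$1$ -- making Corollary~\ref{coro3} applicable -- while $\Tr(H_1)$ is deliberately \emph{not} $q^{n-1}$-to-$1$; then Proposition~\ref{prop2} rules out $G$ being a permutation outright, for every choice of $(f_1,\dots,f_{n-1})$. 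Only the linearized possibility for $G$ remains, and that alone is what the subtracted term $(q^n-q)\cdots(q^n-q^{n-1})$ absorbs. To repair your argument you would need either to adopt this unbalanced choice of $\Tr(H_1)$, or to supply the missing fibrewise argument excluding non-linear permutations $G$ -- and the counterexample above shows the existence step must be reworked in any case.
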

\begin{proof} We first consider the case when $\Tr(H(x))$ is a $q^{n-1}$-to-1 mapping, by Corollary \ref{coro3}, there exists a PP of the type
$$f(x)=u_1f_1(x) +\dots +u_{n-1}f_{n-1}(x)+\gamma \Tr(H(x)).$$
Then $u_1, \dots, u_{n-1}, \gamma$ is a basis of $\gf_{q^n}$ over $\gf_q$.  Let $v_1, \dots, v_{n-1}, v_n$ be the dual base of $u_1, \dots, u_{n-1}, \gamma$. Then $\Tr(H(x))=\Tr(v_nf(x))$.

Since the number of the $q$-linear PPs of the form $u_1\Tr(\theta_1x) +\dots +u_{n-1}\Tr(\theta_{n-1}x)+\gamma \Tr(\theta_nx)$ is $(q^n-q)\dots(q^n-q^{n-1})$, so there are at least $(q^{n-1}!)^q-(q^n-q)\dots(q^n-q^{n-1})$ PPs of the form $u_1f_1(x) +\dots +u_{n-1}f_{n-1}(x)+\gamma \Tr(H(x))$ such that $G(x)=u_1f_1(x) +\dots +u_{n-1}f_{n-1}(x)$ is neither a permutation nor a linearized polynomial.

 Next we consider the case when $\Tr(H(x))$ is neither a $q^{n-1}$-to-1 mapping and nor a constant.
Put
$$M(a)=\{x\in\gf_{q^n}, \, H(x)\in\Tr^{-1}(a)\}=H^{-1}\left(\Tr^{-1}(a)\right), \,\, a\in\gf_q.$$
If $\sharp M(a)=q^{n-1}$ for any $a\in\gf_q$, then $\Tr(H(x))$ is $q^{n-1}$-to-1, and we are done.  Therefore we may assume that $\sharp M(a)>q^{n-1}$ for some $a\in\gf_q$. Recall that $M(a)\ne\gf_{q^n}$, so there exists an element $b\in\gf_q$ such that $b\ne a$ and $\sharp M(b)>0$.

Set
$$M(a)=M_1(a)\uplus M_2(a), \, \sharp M_1(a)=q^{n-1}.$$

$$M(b)=\begin{cases}
M_1(b)\uplus M_2(b), \, \sharp M_1(b)=q^{n-1}, & \, \mbox{if}\,\, \sharp M(b)\geq^{n-1},\\
M_1(b), & \,\, \mbox{otherwise}.\\
\end{cases}$$
For any $a\in\gf_q$, we choose an element $w(a)\in\gf_{q^n}$ with $\Tr(w(a))=a$. Let $H_2(x)\in\gf_{q^n}[x]$  be the polynomial with $H_2(x)=a$ for any $x\in\Tr^{-1}(a)$. Define
$$H_1(x)=\begin{cases}
w(0), & x\in M_1(a)\cup M_1(b),\\
H_2(x)-H(x), & x\in\gf_{q^n}\backslash\left(M_1(a)\cup M_1(b)\right).
\end{cases}$$
Now it is easy to check that $\Tr(H_1(x)+H(x))=\Tr(H_2(x)), x\in\gf_{q^n}$ and $\Tr(H_2(x))$ is a $q^{n-1}$-to-1 mapping. Since $H_1^{-1}\left(\Tr^{-1}(0)\right)\supseteq M_1(a)\cup M_1(b)$ and
$\sharp(M_1(a)\cup M_1(b))>q^{n-1}$, which implies that $\Tr(H_1(x))$ is not a $q^{n-1}$-to-1 mapping.

Since $\Tr(H_2(x))$ is a $q^{n-1}$-to-1 mapping, we can construct at least $(q^{n-1}!)^q-(q^n-q)\dots(q^n-q^{n-1})$ PPs of the form $u_1f_1(x) +\dots +u_{n-1}f_{n-1}(x)+\gamma \Tr(H_2(x))=u_1f_1(x) +\dots +u_{n-1}f_{n-1}(x)+\gamma \Tr(H_1(x))+\gamma\Tr(H(x))$ such that $G(x)=u_1f_1(x) +\dots +u_{n-1}f_{n-1}(x)+\gamma\Tr(H_1(x))$ is neither a permutation nor a linearized polynomial. This completes the proof.\end{proof}

\noindent\textbf{Acknowledgements}\textit{ We are very grateful to Professor Shaoshi Chen, Professor Qiang Wang, Dr. Lianjun Bian and Shuang Li  for their  comments and suggestions that have been a great  help in improving the quality of this paper.}

\end{document}